\newtheorem{theorem}{Theorem}
\theoremstyle{plain}
\newtheorem{definition}{Definition}
\newtheorem{lemma}{Lemma}
\newtheorem{remark}{Remark}
\numberwithin{equation}{section}
\begin{document}
\title[Hermite-Hadamard-Fejer type inequalities]{Hermite-Hadamard-Fejer type
inequalities for convex functions via fractional integrals}
\author{\.{I}mdat \.{I}\c{s}can}
\address{Department of Mathematics, Faculty of Sciences and Arts, Giresun
University, Giresun, Turkey}
\email{imdat.iscan@giresun.edu.tr; imdati@yahoo.com}
\subjclass[2000]{ 26A51, 26A33, 26D10. }
\keywords{Hermite-Hadamard inequality, Hermite-Hadamard-Fejer inequality,
Riemann-Liouville fractional integral, convex function.}

\begin{abstract}
In this paper, firstly we have established Hermite--Hadamard-Fej\'{e}r
inequality for fractional integrals. Secondly, an integral identity and some
Hermite-Hadamard-Fejer type integral inequalities for the fractional
integrals have been obtained. The some results presented here would provide
extensions of those given in earlier works.
\end{abstract}

\maketitle

\section{Introduction}

Let $f:I\subseteq \mathbb{R\rightarrow R}$ be a convex function defined on
the interval $I$ of real numbers and $a,b\in I$ with $a<b$. The inequality 
\begin{equation}
f\left( \frac{a+b}{2}\right) \leq \frac{1}{b-a}\int_{a}^{b}f(x)dx\leq \frac{%
f(a)+f(b)}{2}  \label{1-1}
\end{equation}%
is well known in the literature as Hermite-Hadamard's inequality \cite{H93}.

The most well-known inequalities related to the integral mean of a convex
function $f$ are the Hermite Hadamard inequalities or its weighted versions,
the so-called Hermite-Hadamard-Fej\'{e}r inequalities.

In \cite{F06}, Fej\'{e}r established the following Fej\'{e}r inequality
which is the weighted generalization of Hermite-Hadamard inequality (\ref%
{1-1}):

\begin{theorem}
\label{1.2}Let $f:\left[ a,b\right] \mathbb{\rightarrow R}$ be convex
function. Then the inequality%
\begin{equation}
f\left( \frac{a+b}{2}\right) \int_{a}^{b}g(x)dx\leq \frac{1}{b-a}%
\int_{a}^{b}f(x)g(x)dx\leq \frac{f(a)+f(b)}{2}\int_{a}^{b}g(x)dx  \label{1-2}
\end{equation}%
holds, where $g:\left[ a,b\right] \mathbb{\rightarrow R}$ is
nonnegative,integrable and symmetric to $(a+b)/2.$
\end{theorem}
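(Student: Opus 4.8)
The plan is to establish the two bounds separately, in each case combining the convexity of $f$ with the symmetry of $g$ about the midpoint, the latter being exploited through the reflection substitution $x\mapsto a+b-x$. For the left-hand bound I would apply convexity to the point $\frac{a+b}{2}$ viewed as the midpoint of $x$ and its reflection $a+b-x$: since these average to $\frac{a+b}{2}$, convexity gives
\[ f\left(\frac{a+b}{2}\right)\leq \frac{1}{2}\left[f(x)+f(a+b-x)\right] \]
for every $x\in[a,b]$. Multiplying through by $g(x)\geq 0$ and integrating over $[a,b]$ produces a lower estimate for $\frac{1}{2}\int_{a}^{b}\left[f(x)+f(a+b-x)\right]g(x)\,dx$; the decisive step is to transform $\int_{a}^{b}f(a+b-x)g(x)\,dx$ by the change of variable $u=a+b-x$ and invoke $g(a+b-u)=g(u)$, which collapses it to $\int_{a}^{b}f(u)g(u)\,du$. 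The two pieces then coincide and yield $f\left(\frac{a+b}{2}\right)\int_{a}^{b}g(x)\,dx\leq \int_{a}^{b}f(x)g(x)\,dx$.

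For the right-hand bound I would instead use the chord (affine) estimate from convexity. Writing each $x\in[a,b]$ as the convex combination $x=\frac{b-x}{b-a}a+\frac{x-a}{b-a}b$, convexity yields
\[ f(x)\leq \frac{b-x}{b-a}f(a)+\frac{x-a}{b-a}f(b). \]
After multiplying by $g(x)\geq 0$ and integrating, the right-hand side reduces to a combination of the weighted moments $\int_{a}^{b}(b-x)g(x)\,dx$ and $\int_{a}^{b}(x-a)g(x)\,dx$. Here the symmetry of $g$ is used once more: the substitution $x\mapsto a+b-x$ shows these two moments are equal, and because their sum equals $(b-a)\int_{a}^{b}g(x)\,dx$, each is $\frac{b-a}{2}\int_{a}^{b}g(x)\,dx$. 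Substituting back gives $\int_{a}^{b}f(x)g(x)\,dx\leq \frac{f(a)+f(b)}{2}\int_{a}^{b}g(x)\,dx$, which closes the chain.

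I expect the only real point requiring care to be the handling of the symmetry hypothesis: one must check that the reflection change of variable is legitimate for a merely nonnegative and integrable $g$, and that it meshes with each convexity estimate so that the midpoint value factors out cleanly and the two endpoint moments balance exactly. The convexity inequalities are themselves elementary and the integrations are routine, so once the symmetry-driven identities for the relevant integrals are secured, the full two-sided inequality follows at once.
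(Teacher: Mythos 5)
Your proposal is correct: both halves go through, with the symmetry of $g$ entering exactly where you say (the reflection substitution $u=a+b-x$ is legitimate for any integrable $g$, and nonnegativity of $g$ is what lets you multiply the convexity inequalities through). The paper itself does not prove Theorem \ref{1.2} (it is quoted from Fej\'{e}r), but its proof of the fractional generalization, Theorem \ref{2.2}, specializes at $\alpha=1$ to essentially your argument --- identical for the left-hand bound, and for the right-hand bound differing only in that the paper uses the paired estimate $f(x)+f(a+b-x)\leq f(a)+f(b)$ where you use the chord bound at a single point together with the symmetry-forced equality of the two moments $\int_{a}^{b}(b-x)g(x)\,dx$ and $\int_{a}^{b}(x-a)g(x)\,dx$.
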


For some results which generalize, improve, and extend the inequalities (\ref%
{1-1}) and (\ref{1-2}) see \cite{BV09,I13a,I13aa,I13ab,S12a,TYH11}.

We give some necessary definitions and mathematical preliminaries of
fractional calculus theory which are used throughout this paper.

\begin{definition}
Let $f\in L\left[ a,b\right] $. The Riemann-Liouville integrals $%
J_{a+}^{\alpha }f$ and $J_{b-}^{\alpha }f$ of oder $\alpha >0$ with $a\geq 0$
are defined by

\begin{equation*}
J_{a+}^{\alpha }f(x)=\frac{1}{\Gamma (\alpha )}\int_{a}^{x}\left( x-t\right)
^{\alpha -1}f(t)dt,\ x>a
\end{equation*}

and

\begin{equation*}
J_{b-}^{\alpha }f(x)=\frac{1}{\Gamma (\alpha )}\int_{x}^{b}\left( t-x\right)
^{\alpha -1}f(t)dt,\ x<b
\end{equation*}%
respectively, where $\Gamma (\alpha )$ is the Gamma function defined by $%
\Gamma (\alpha )=$ $\dint\limits_{0}^{\infty }e^{-t}t^{\alpha -1}dt$ and $%
J_{a+}^{0}f(x)=J_{b-}^{0}f(x)=f(x).$
\end{definition}

Because of the wide application of Hermite-Hadamard type inequalities and
fractional integrals, many researchers extend their studies to
Hermite-Hadamard type inequalities involving fractional integrals not
limited to integer integrals. Recently, more and more Hermite-Hadamard
inequalities involving fractional integrals have been obtained for different
classes of functions; see \cite{D10,I13b,I13d,SSYB13,S12,WFZ12,WZZ13}.

In \cite{SSYB13}, Sar\i kaya et. al. represented Hermite--Hadamard's
inequalities in fractional integral forms as follows.

\begin{theorem}
\label{1.3}Let $f:\left[ a,b\right] \rightarrow 
%TCIMACRO{\U{211d} }%
%BeginExpansion
\mathbb{R}
%EndExpansion
$ be a positive function with $0\leq a<b$ and $f\in L\left[ a,b\right] $. If 
$f$ is a convex function on $\left[ a,b\right] $, then the following
inequalities for fractional integrals hold%
\begin{equation}
f\left( \frac{a+b}{2}\right) \leq \frac{\Gamma (\alpha +1)}{2\left(
b-a\right) ^{\alpha }}\left[ J_{a+}^{\alpha }f(b)+J_{b-}^{\alpha }f(a)\right]
\leq \frac{f(a)+f(b)}{2}  \label{1-3}
\end{equation}%
with $\alpha >0.$
\end{theorem}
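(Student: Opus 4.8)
The plan is to prove the fractional Hermite-Hadamard inequality (1-3) by combining the convexity of $f$ with a symmetrization argument on the Riemann-Liouville kernel. The key observation is that the two fractional integrals $J_{a+}^{\alpha}f(b)$ and $J_{b-}^{\alpha}f(a)$ can be rewritten, after the substitution $t = \tfrac{1+s}{2}a + \tfrac{1-s}{2}b$ (or equivalently $x = ta + (1-t)b$ for $t\in[0,1]$), as integrals over $[0,1]$ against the weight $(1-t)^{\alpha-1}$ or a symmetric combination thereof. I would aim to show that

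\begin{equation*}
\frac{\Gamma(\alpha+1)}{2(b-a)^{\alpha}}\left[J_{a+}^{\alpha}f(b)+J_{b-}^{\alpha}f(a)\right]
= \frac{\alpha}{2}\int_{0}^{1}t^{\alpha-1}\left[f(ta+(1-t)b)+f((1-t)a+tb)\right]dt.
\end{equation*}

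For the \emph{left} inequality, I would apply the midpoint definition of convexity to the integrand: since $\tfrac{1}{2}\bigl[(ta+(1-t)b)+((1-t)a+tb)\bigr] = \tfrac{a+b}{2}$ for every $t$, convexity gives $f\left(\tfrac{a+b}{2}\right)\le \tfrac12\bigl[f(ta+(1-t)b)+f((1-t)a+tb)\bigr]$ pointwise. Multiplying by $\alpha t^{\alpha-1}\ge 0$ and integrating over $[0,1]$, using $\alpha\int_0^1 t^{\alpha-1}\,dt = 1$, yields the lower bound immediately.

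For the \emph{right} inequality, I would use convexity in its chord form: for $t\in[0,1]$ we have $f(ta+(1-t)b)\le t f(a)+(1-t)f(b)$ and symmetrically $f((1-t)a+tb)\le (1-t)f(a)+t f(b)$. Adding these two bounds makes the integrand at most $f(a)+f(b)$, a constant; multiplying by $\tfrac{\alpha}{2}t^{\alpha-1}$ and integrating (again using $\alpha\int_0^1 t^{\alpha-1}\,dt=1$) produces the upper bound $\tfrac{f(a)+f(b)}{2}$. The main obstacle, and the step deserving the most care, is establishing the change-of-variables identity cleanly: one must verify that $J_{a+}^{\alpha}f(b)=\tfrac{1}{\Gamma(\alpha)}\int_a^b (b-t)^{\alpha-1}f(t)\,dt$ and $J_{b-}^{\alpha}f(a)=\tfrac{1}{\Gamma(\alpha)}\int_a^b (t-a)^{\alpha-1}f(t)\,dt$ from the Definition, then confirm that both substitutions map onto the same weight $t^{\alpha-1}$ so that the two contributions can be combined under a single integral, and track the factor $\Gamma(\alpha+1)=\alpha\Gamma(\alpha)$ so the normalizing constants match. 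Once this bookkeeping is correct, both inequalities follow from pointwise convexity estimates with no further analytic difficulty.
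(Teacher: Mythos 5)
Your proposal is correct: the change-of-variables identity, the midpoint-convexity argument for the lower bound, and the chord-form estimate for the upper bound all check out. This is essentially the same approach the paper takes — Theorem \ref{1.3} is quoted from \cite{SSYB13}, but the paper's proof of its weighted generalization (Theorem \ref{2.2}) uses exactly your argument with the extra weight $g\left( tb+(1-t)a\right) $ carried along, and your proof is precisely the $g\equiv 1$ specialization.
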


In \cite{SSYB13} some Hermite-Hadamard type integral inequalities for
fractional integral were proved using the following lemma.

\begin{lemma}
\label{1.4}Let $f:\left[ a,b\right] \rightarrow 
%TCIMACRO{\U{211d} }%
%BeginExpansion
\mathbb{R}
%EndExpansion
$ be a differentiable mapping on $\left( a,b\right) $ with $a<b.$ If $%
f^{\prime }\in L\left[ a,b\right] $ then the following equality for
fractional integrals holds:%
\begin{equation}
\frac{f(a)+f(b)}{2}-\frac{\Gamma (\alpha +1)}{2\left( b-a\right) ^{\alpha }}%
\left[ J_{a+}^{\alpha }f(b)+J_{b-}^{\alpha }f(a)\right] =\frac{b-a}{2}%
\int_{0}^{1}\left[ \left( 1-t\right) ^{\alpha }-t^{\alpha }\right] f^{\prime
}\left( ta+(1-t)b\right) dt.  \label{1-4}
\end{equation}
\end{lemma}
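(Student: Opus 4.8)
The plan is to work backwards from the right--hand side. Write
\[
I=\int_{0}^{1}\left[ \left( 1-t\right) ^{\alpha }-t^{\alpha }\right]
f^{\prime }\left( ta+(1-t)b\right) dt
\]
and split it as $I=I_{1}-I_{2}$, where $I_{1}=\int_{0}^{1}\left(
1-t\right) ^{\alpha }f^{\prime }\left( ta+(1-t)b\right) dt$ and $%
I_{2}=\int_{0}^{1}t^{\alpha }f^{\prime }\left( ta+(1-t)b\right) dt$. The
key observation is that $\frac{d}{dt}f\left( ta+(1-t)b\right) =\left(
a-b\right) f^{\prime }\left( ta+(1-t)b\right) $, so each $I_{j}$ can be
handled by integration by parts with antiderivative $v=\frac{1}{a-b}f\left(
ta+(1-t)b\right) $. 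Integration by parts is legitimate because $f^{\prime
}\in L\left[ a,b\right] $ and the weights $\left( 1-t\right) ^{\alpha
},t^{\alpha }$ are bounded.

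For $I_{1}$ I take $u=\left( 1-t\right) ^{\alpha }$; the boundary term $%
\left[ \left( 1-t\right) ^{\alpha }\frac{f\left( ta+(1-t)b\right) }{a-b}%
\right] _{0}^{1}$ collapses to $\frac{f(b)}{b-a}$, since $\left( 1-t\right)
^{\alpha }$ vanishes at $t=1$ (this is where $\alpha >0$ is essential) and
equals $1$ at $t=0$, leaving $\frac{\alpha }{a-b}\int_{0}^{1}\left(
1-t\right) ^{\alpha -1}f\left( ta+(1-t)b\right) dt$. Likewise, for $I_{2}$
with $u=t^{\alpha }$ the boundary term produces $-\frac{f(a)}{b-a}$ and the
leftover is $-\frac{\alpha }{a-b}\int_{0}^{1}t^{\alpha -1}f\left(
ta+(1-t)b\right) dt$.

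The decisive step is the substitution $x=ta+(1-t)b=b-t\left( b-a\right) $,
under which $1-t=\left( x-a\right) /\left( b-a\right) $, $t=\left(
b-x\right) /\left( b-a\right) $, $dt=-dx/\left( b-a\right) $, and the limits
flip from $t\in \left[ 0,1\right] $ to $x\in \left[ a,b\right] $. This turns
the $\left( 1-t\right) ^{\alpha -1}$ integral into $\left( b-a\right)
^{-\alpha }\int_{a}^{b}\left( x-a\right) ^{\alpha -1}f(x)dx=\left(
b-a\right) ^{-\alpha }\Gamma (\alpha )J_{b-}^{\alpha }f(a)$, and the $%
t^{\alpha -1}$ integral into $\left( b-a\right) ^{-\alpha
}\int_{a}^{b}\left( b-x\right) ^{\alpha -1}f(x)dx=\left( b-a\right)
^{-\alpha }\Gamma (\alpha )J_{a+}^{\alpha }f(b)$, exactly matching the two
Riemann--Liouville integrals in the Definition. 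Using $\alpha \Gamma (\alpha
)=\Gamma (\alpha +1)$ and assembling $I=I_{1}-I_{2}$ gives
\[
I=\frac{f(a)+f(b)}{b-a}-\frac{\Gamma (\alpha +1)}{\left( b-a\right) ^{\alpha
+1}}\left[ J_{a+}^{\alpha }f(b)+J_{b-}^{\alpha }f(a)\right] ,
\]
and multiplying through by $\left( b-a\right) /2$ yields precisely (\ref{1-4}%
). I expect the only genuine pitfalls to be bookkeeping: keeping the sign of
$a-b$ straight through both the integration by parts and the
orientation--reversing substitution, and correctly pairing which of $%
J_{a+}^{\alpha }f(b)$ and $J_{b-}^{\alpha }f(a)$ arises from which half of $%
I$. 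The analytic content is otherwise light.
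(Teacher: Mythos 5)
Your proof is correct: the integration by parts, the boundary terms (where $\alpha>0$ kills $(1-t)^{\alpha}$ at $t=1$ and $t^{\alpha}$ at $t=0$), the substitution $x=ta+(1-t)b$, and the final use of $\alpha\Gamma(\alpha)=\Gamma(\alpha+1)$ all check out, and assembling $I=I_{1}-I_{2}$ and multiplying by $(b-a)/2$ does give (\ref{1-4}). Note, however, that the paper never proves Lemma \ref{1.4} directly --- it is imported from \cite{SSYB13} --- and instead recovers it (see the Remark following Lemma \ref{2.3}) as the special case $g\equiv 1$ of the weighted identity of Lemma \ref{2.3}. The proof of that lemma is your argument run on the interval $[a,b]$ itself: the kernel is written as $\int_{a}^{t}(b-s)^{\alpha-1}g(s)\,ds-\int_{t}^{b}(s-a)^{\alpha-1}g(s)\,ds$, one integrates by parts against $f^{\prime}(t)\,dt$ so that $f^{\prime}$ integrates up to $f$ while the kernel differentiates down to $(b-t)^{\alpha-1}g(t)$ and $(t-a)^{\alpha-1}g(t)$, and the symmetry of $g$ (Lemma \ref{2.1}) is then needed to balance the boundary terms. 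Your direct computation on $[0,1]$ is lighter precisely because the weight is absent, so the antiderivatives $(1-t)^{\alpha}$ and $t^{\alpha}$ are explicit; it does not extend to the weighted setting without replacing them by integrals of the weighted kernels, which is exactly what the paper's formulation buys at the cost of carrying $g$ through the computation.
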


\begin{theorem}
\label{1.5}Let $f:\left[ a,b\right] \rightarrow 
%TCIMACRO{\U{211d} }%
%BeginExpansion
\mathbb{R}
%EndExpansion
$ be a differentiable mapping on $\left( a,b\right) $ with $a<b.$ If $%
\left\vert f^{\prime }\right\vert $ is convex on $\left[ a,b\right] $ then
the following inequality for fractional integrals holds:%
\begin{equation}
\left\vert \frac{f(a)+f(b)}{2}-\frac{\Gamma (\alpha +1)}{2\left( b-a\right)
^{\alpha }}\left[ J_{a+}^{\alpha }f(b)+J_{b-}^{\alpha }f(a)\right]
\right\vert \leq \frac{b-a}{2\left( \alpha +1\right) }\left( 1-\frac{1}{%
2^{\alpha }}\right) \left[ \left\vert f^{\prime }\left( a\right) \right\vert
+\left\vert f^{\prime }\left( b\right) \right\vert \right] .  \label{1-5}
\end{equation}
\end{theorem}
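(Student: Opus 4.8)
The plan is to argue directly from the identity in Lemma \ref{1.4}, take absolute values of both sides, and push the modulus inside the integral using the triangle inequality. This reduces the whole problem to estimating
\begin{equation*}
\frac{b-a}{2}\int_{0}^{1}\left\vert (1-t)^{\alpha }-t^{\alpha }\right\vert \left\vert f^{\prime }\left( ta+(1-t)b\right) \right\vert dt.
\end{equation*}
To control the integrand I would invoke the convexity of $\left\vert f^{\prime }\right\vert $, which yields the pointwise bound $\left\vert f^{\prime }(ta+(1-t)b)\right\vert \leq t\left\vert f^{\prime }(a)\right\vert +(1-t)\left\vert f^{\prime }(b)\right\vert $ for every $t\in \lbrack 0,1]$.

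Substituting this bound splits the estimate into two weighted integrals, carrying the coefficients $\int_{0}^{1}t\left\vert (1-t)^{\alpha }-t^{\alpha }\right\vert dt$ and $\int_{0}^{1}(1-t)\left\vert (1-t)^{\alpha }-t^{\alpha }\right\vert dt$ in front of $\left\vert f^{\prime }(a)\right\vert $ and $\left\vert f^{\prime }(b)\right\vert $ respectively. The key observation is that the change of variable $t\mapsto 1-t$ leaves the symmetric factor $\left\vert (1-t)^{\alpha }-t^{\alpha }\right\vert $ invariant while interchanging the weights $t$ and $1-t$; hence these two integrals are equal, and the estimate collapses to a single common constant multiplying the symmetric sum $\left\vert f^{\prime }(a)\right\vert +\left\vert f^{\prime }(b)\right\vert $, which is exactly the form demanded by the right-hand side of (\ref{1-5}).

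It then remains to evaluate that common constant. Since $(1-t)^{\alpha }\geq t^{\alpha }$ precisely when $t\leq 1/2$, I would remove the modulus by splitting the integral at $t=1/2$: on $[0,1/2]$ the integrand equals $(1-t)^{\alpha }-t^{\alpha }$, and on $[1/2,1]$ the sign reverses. Exploiting the same $t\mapsto 1-t$ symmetry once more rewrites the constant as twice an integral over $[0,1/2]$, which is computed by the elementary antiderivatives of $(1-t)^{\alpha }$ and $t^{\alpha }$. A short calculation gives the common value $\frac{1}{\alpha +1}\left( 1-\frac{1}{2^{\alpha }}\right) $, and assembling everything produces the bound $\frac{b-a}{2(\alpha +1)}\left( 1-\frac{1}{2^{\alpha }}\right) \left[ \left\vert f^{\prime }(a)\right\vert +\left\vert f^{\prime }(b)\right\vert \right] $.

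The only genuine obstacle is the bookkeeping around $\left\vert (1-t)^{\alpha }-t^{\alpha }\right\vert $: one must split at the correct point $t=1/2$ and track the two sign regimes separately, since integrating $(1-t)^{\alpha }-t^{\alpha }$ without the modulus would introduce cancellation and give the wrong (in fact vanishing) constant. Once the absolute value is handled, the remainder is a routine combination of convexity and the triangle inequality.
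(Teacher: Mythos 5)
Your argument is correct and is exactly the intended route: the paper itself imports Theorem \ref{1.5} from \cite{SSYB13} without reproving it, and the standard proof there (and the paper's own proof of the weighted generalization, Theorem \ref{2.4}) proceeds just as you do --- apply Lemma \ref{1.4}, bound $\left\vert f^{\prime }(ta+(1-t)b)\right\vert$ by convexity, use the $t\mapsto 1-t$ symmetry to equalize the two weighted integrals, and split at $t=1/2$ to evaluate the common constant $\frac{1}{\alpha +1}\left(1-\frac{1}{2^{\alpha }}\right)$. All the computations check out, so nothing further is needed.
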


\begin{lemma}[\protect\cite{PBM81,WZZ13}]
\label{1.6}For $0<\alpha \leq 1$ and $0\leq a<b$, we have%
\begin{equation*}
\left\vert a^{\alpha }-b^{\alpha }\right\vert \leq \left( b-a\right)
^{\alpha }.
\end{equation*}
\end{lemma}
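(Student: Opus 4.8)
The plan is to reduce the inequality to the \emph{subadditivity} of the power function $t\mapsto t^{\alpha}$ on $[0,\infty)$ for exponents $0<\alpha\le 1$. First I would dispose of the absolute value. Since $\alpha>0$, the map $x\mapsto x^{\alpha}$ is strictly increasing on $[0,\infty)$, and the hypothesis $0\le a<b$ therefore forces $a^{\alpha}<b^{\alpha}$. Hence $\left\vert a^{\alpha}-b^{\alpha}\right\vert=b^{\alpha}-a^{\alpha}$, and it suffices to prove the one-sided bound $b^{\alpha}-a^{\alpha}\le(b-a)^{\alpha}$.

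The key step is the claim that $(x+y)^{\alpha}\le x^{\alpha}+y^{\alpha}$ for all $x,y\ge 0$ whenever $0<\alpha\le 1$. Granting this, I would simply apply it with $x=a$ and $y=b-a\ge 0$, obtaining $b^{\alpha}=\left(a+(b-a)\right)^{\alpha}\le a^{\alpha}+(b-a)^{\alpha}$, which rearranges to exactly the desired inequality. So the whole lemma comes down to establishing subadditivity of $t^{\alpha}$.

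To prove subadditivity I would argue from concavity. For $\alpha=1$ the inequality is an equality, so I may assume $0<\alpha<1$, in which case $\phi(t)=t^{\alpha}$ is concave on $[0,\infty)$ with $\phi(0)=0$. For $x,y>0$ I would write $x$ as the convex combination $x=\frac{x}{x+y}(x+y)+\frac{y}{x+y}\cdot 0$; concavity then gives $\phi(x)\ge\frac{x}{x+y}\,\phi(x+y)$, and symmetrically $\phi(y)\ge\frac{y}{x+y}\,\phi(x+y)$. Adding these two inequalities yields $\phi(x)+\phi(y)\ge\phi(x+y)$, which is the subadditivity claim; the degenerate cases $x=0$ or $y=0$ are immediate.

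Since the statement is elementary, there is no serious obstacle here; the only point requiring a little care is the boundary behavior. If one instead prefers a direct calculus route, fixing $c=b-a>0$ and setting $g(a)=(a+c)^{\alpha}-a^{\alpha}$ gives $g'(a)=\alpha\left[(a+c)^{\alpha-1}-a^{\alpha-1}\right]\le 0$, because $\alpha-1\le 0$ makes $t\mapsto t^{\alpha-1}$ nonincreasing; hence $g$ is nonincreasing and $g(a)\le g(0)=c^{\alpha}$. The one subtlety is the point $a=0$, where the derivative blows up for $\alpha<1$: there I would invoke continuity of $g$ on $[0,\infty)$ together with monotonicity on $(0,\infty)$ to conclude the bound at the endpoint.
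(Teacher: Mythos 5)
Your proof is correct. Note that the paper itself offers no proof of this lemma at all: it is quoted as a known fact with a citation to \cite{PBM81,WZZ13}, so there is no argument in the paper to compare against. What you have supplied is the standard self-contained justification: reduce to the one-sided bound $b^{\alpha}-a^{\alpha}\le (b-a)^{\alpha}$ via monotonicity of $x\mapsto x^{\alpha}$, then derive it from subadditivity of $t\mapsto t^{\alpha}$ on $[0,\infty)$, which you correctly obtain from concavity together with $\phi(0)=0$ (the convex-combination trick $x=\frac{x}{x+y}(x+y)+\frac{y}{x+y}\cdot 0$ is exactly right, and the degenerate cases $x=0$ or $y=0$ are indeed trivial). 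Your alternative calculus argument, with the observation that $g(a)=(a+c)^{\alpha}-a^{\alpha}$ is nonincreasing because $t\mapsto t^{\alpha-1}$ is nonincreasing, is also sound, and your care about the endpoint $a=0$, where one passes to the limit by continuity rather than differentiating, is the one genuinely delicate point and you handle it properly. Either of your two routes would serve as a complete proof of the cited lemma.
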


In this paper, we firstly represented Hermite-Hadamard-Fej\'{e}r inequality
in fractional integral forms which is the weighted generalization of
Hermite-Hadamard inequality (\ref{1-3}). Secondly, we obtained some new
inequalities connected with the right-hand side of Hermite-Hadamard-Fej\'{e}%
r type integral inequality for the fractional integrals.

\section{Main results}

Throughout this section, let $\left\Vert g\right\Vert _{\infty }=\sup_{t\in %
\left[ a,b\right] }\left\vert g(x)\right\vert $, for the continuous function 
$g:\left[ a,b\right] \mathbb{\rightarrow 
%TCIMACRO{\U{211d} }%
%BeginExpansion
\mathbb{R}
%EndExpansion
}$.

\begin{lemma}
\label{2.1}If $g:\left[ a,b\right] \mathbb{\rightarrow R}$ is integrable and
symmetric to $(a+b)/2$ with $a<b$, then 
\begin{equation*}
J_{a+}^{\alpha }g(b)=J_{b-}^{\alpha }g(a)=\frac{1}{2}\left[ J_{a+}^{\alpha
}g(b)+J_{b-}^{\alpha }g(a)\right]
\end{equation*}%
with $\alpha >0.$
\end{lemma}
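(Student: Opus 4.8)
The plan is to reduce everything to the single nontrivial identity $J_{a+}^{\alpha }g(b)=J_{b-}^{\alpha }g(a)$. Once that is established, the remaining equality is automatic: the arithmetic mean $\tfrac{1}{2}\left[ J_{a+}^{\alpha }g(b)+J_{b-}^{\alpha }g(a)\right]$ of two equal quantities coincides with each of them, so no further work is needed for the second equals sign.

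To prove $J_{a+}^{\alpha }g(b)=J_{b-}^{\alpha }g(a)$, I would start from the definition
\begin{equation*}
J_{a+}^{\alpha }g(b)=\frac{1}{\Gamma (\alpha )}\int_{a}^{b}\left( b-t\right) ^{\alpha -1}g(t)\,dt
\end{equation*}
and apply the substitution $t=a+b-u$ (equivalently $u=a+b-t$), which is precisely the reflection of $[a,b]$ about its midpoint $(a+b)/2$. Under this change of variable one has $dt=-du$, the endpoints are interchanged, the kernel transforms as $b-t=u-a$ so that $(b-t)^{\alpha -1}=(u-a)^{\alpha -1}$, and the symmetry hypothesis $g(a+b-u)=g(u)$ replaces $g(t)$ by $g(u)$. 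Collecting these, the right-hand side becomes
\begin{equation*}
\frac{1}{\Gamma (\alpha )}\int_{a}^{b}\left( u-a\right) ^{\alpha -1}g(u)\,du,
\end{equation*}
which is exactly $J_{b-}^{\alpha }g(a)$ by definition, completing the argument.

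There is no genuine obstacle here: the whole content is that the reflection $t\mapsto a+b-t$ simultaneously fixes $g$ (by symmetry) and swaps the two Riemann--Liouville kernels $(b-t)^{\alpha -1}$ and $(t-a)^{\alpha -1}$. The only points deserving a moment's care are bookkeeping ones, namely tracking the sign from $dt=-du$ together with the reversal of the limits of integration, and observing that the reflection is a measure-preserving bijection of $[a,b]$, so that integrability of $g$ is preserved and the manipulation is valid for every $\alpha >0$.
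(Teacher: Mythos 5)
Your proposal is correct and follows essentially the same route as the paper: the single substantive step in both is the reflection substitution $t\mapsto a+b-t$, which swaps the kernels $(b-t)^{\alpha-1}$ and $(t-a)^{\alpha-1}$ while the symmetry hypothesis fixes $g$, yielding $J_{a+}^{\alpha}g(b)=J_{b-}^{\alpha}g(a)$, after which the averaged form is immediate. No gaps.
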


\begin{proof}
Since $g$ is symmetric to $(a+b)/2,$ we have $g\left( a+b-x\right) =g(x)$,
for all $x\in \left[ a,b\right] .$ Hence, in the following integral setting $%
x=tb+(1-t)a$ and $dx=\left( b-a\right) dt$ gives 
\begin{eqnarray*}
J_{a+}^{\alpha }g(b) &=&\frac{1}{\Gamma (\alpha )}\dint\limits_{a}^{b}\left(
b-x\right) ^{\alpha -1}g(x)dx \\
&=&\frac{1}{\Gamma (\alpha )}\dint\limits_{a}^{b}\left( x-a\right) ^{\alpha
-1}g(a+b-x)dx \\
&=&\frac{1}{\Gamma (\alpha )}\dint\limits_{a}^{b}\left( x-a\right) ^{\alpha
-1}g(x)dx=J_{b-}^{\alpha }g(a).
\end{eqnarray*}%
This completes the proof.
\end{proof}

\begin{theorem}
\label{2.2}Let $f:\left[ a,b\right] \mathbb{\rightarrow R}$ be convex
function with $a<b$ and $f\in L\left[ a,b\right] $. If $g:\left[ a,b\right] 
\mathbb{\rightarrow R}$ is nonnegative,integrable and symmetric to $(a+b)/2$%
, then the following inequalities for fractional integrals hold%
\begin{eqnarray}
f\left( \frac{a+b}{2}\right) \left[ J_{a+}^{\alpha }g(b)+J_{b-}^{\alpha }g(a)%
\right] &\leq &\left[ J_{a+}^{\alpha }\left( fg\right) (b)+J_{b-}^{\alpha
}\left( fg\right) (a)\right]  \label{2-2} \\
&\leq &\frac{f(a)+f(b)}{2}\left[ J_{a+}^{\alpha }g(b)+J_{b-}^{\alpha }g(a)%
\right]  \notag
\end{eqnarray}%
with $\alpha >0.$
\end{theorem}

\begin{proof}
Since $f$ is a convex function on $\left[ a,b\right] $, we have for all $%
t\in \left[ 0,1\right] $%
\begin{eqnarray}
f\left( \frac{a+b}{2}\right) &=&f\left( \frac{ta+(1-t)b+tb+(1-t)a}{2}\right)
\notag \\
&\leq &\frac{f\left( ta+(1-t)b\right) +f\left( tb+(1-t)a\right) }{2}.
\label{2-2a}
\end{eqnarray}%
Multiplying both sides of (\ref{2-2a}) by $2t^{\alpha -1}g\left(
tb+(1-t)a\right) $ then integrating the resulting inequality with respect to 
$t$ over $\left[ 0,1\right] $, we obtain%
\begin{eqnarray*}
&&2f\left( \frac{a+b}{2}\right) \int_{0}^{1}t^{\alpha -1}g\left(
tb+(1-t)a\right) dt \\
&\leq &\int_{0}^{1}t^{\alpha -1}\left[ f\left( ta+(1-t)b\right) +f\left(
tb+(1-t)a\right) \right] g\left( tb+(1-t)a\right) dt \\
&=&\int_{0}^{1}t^{\alpha -1}f\left( ta+(1-t)b\right) g\left(
tb+(1-t)a\right) dt+\int_{0}^{1}t^{\alpha -1}f\left( tb+(1-t)a\right)
g\left( tb+(1-t)a\right) dt.
\end{eqnarray*}%
Setting $x=tb+(1-t)a$, and $dx=\left( b-a\right) dt$ gives%
\begin{eqnarray*}
&&\frac{2}{\left( b-a\right) ^{\alpha }}f\left( \frac{a+b}{2}\right)
\int_{a}^{b}\left( x-a\right) ^{\alpha -1}g\left( x\right) dx \\
&\leq &\frac{1}{\left( b-a\right) ^{\alpha }}\left\{ \int_{a}^{b}\left(
x-a\right) ^{\alpha -1}f\left( a+b-x\right) g\left( x\right)
dx+\int_{0}^{1}\left( x-a\right) ^{\alpha -1}f\left( x\right) g\left(
x\right) dx\right\} \\
&=&\frac{1}{\left( b-a\right) ^{\alpha }}\left\{ \int_{a}^{b}\left(
b-x\right) ^{\alpha -1}f\left( x\right) g\left( a+b-x\right)
dx+\int_{0}^{1}\left( x-a\right) ^{\alpha -1}f\left( x\right) g\left(
x\right) dx\right\} \\
&=&\frac{1}{\left( b-a\right) ^{\alpha }}\left\{ \int_{a}^{b}\left(
b-x\right) ^{\alpha -1}f\left( x\right) g\left( x\right)
dx+\int_{0}^{1}\left( x-a\right) ^{\alpha -1}f\left( x\right) g\left(
x\right) dx\right\} .
\end{eqnarray*}%
Therefore, by Lemma \ref{2.1} we have%
\begin{equation*}
\frac{\Gamma (\alpha )}{\left( b-a\right) ^{\alpha }}f\left( \frac{a+b}{2}%
\right) \left[ J_{a+}^{\alpha }g(b)+J_{b-}^{\alpha }g(a)\right] \leq \frac{%
\Gamma (\alpha )}{\left( b-a\right) ^{\alpha }}\left[ J_{a+}^{\alpha }\left(
fg\right) (b)+J_{b-}^{\alpha }\left( fg\right) (a)\right]
\end{equation*}

and the first inequality is proved.

For the proof of the second inequality in (\ref{2-2}) we first note that if $%
f$ is a convex function, then, for all $t\in \left[ 0,1\right] $, it yields%
\begin{equation}
f\left( ta+(1-t)b\right) +f\left( tb+(1-t)a\right) \leq f(a)+f(b).
\label{2-2b}
\end{equation}%
Then multiplying both sides of (\ref{2-2b}) by $2t^{\alpha -1}g\left(
tb+(1-t)a\right) $ and integrating the resulting inequality with respect to $%
t$ over $\left[ 0,1\right] $, we obtain%
\begin{eqnarray*}
&&\int_{0}^{1}t^{\alpha -1}f\left( ta+(1-t)b\right) g\left( tb+(1-t)a\right)
dt+\int_{0}^{1}t^{\alpha -1}f\left( tb+(1-t)a\right) g\left(
tb+(1-t)a\right) dt \\
&\leq &\left[ f(a)+f(b)\right] \int_{0}^{1}t^{\alpha -1}g\left(
tb+(1-t)a\right) dt
\end{eqnarray*}%
i.e.%
\begin{equation*}
\frac{\Gamma (\alpha )}{\left( b-a\right) ^{\alpha }}\left[ J_{a+}^{\alpha
}\left( fg\right) (b)+J_{b-}^{\alpha }\left( fg\right) (a)\right] \leq \frac{%
\Gamma (\alpha )}{\left( b-a\right) ^{\alpha }}\left( \frac{f(a)+f(b)}{2}%
\right) \left[ J_{a+}^{\alpha }g(b)+J_{b-}^{\alpha }g(a)\right]
\end{equation*}%
The proof is completed.
\end{proof}

\begin{remark}
In Theorem \ref{2.2},

(i) if we take $\alpha =1$, then inequality (\ref{2-2}) becomes inequality (%
\ref{1-2}) of Theorem \ref{1.2}.

(ii) if we take $g(x)=1$, then inequality (\ref{2-2}) becomes inequality (%
\ref{1-3}) of Theorem \ref{1.3}.
\end{remark}

\begin{lemma}
\label{2.3}Let $f:\left[ a,b\right] \mathbb{\rightarrow R}$ be a
differentiable mapping on $\left( a,b\right) $ with $a<b$ and $f^{\prime
}\in L\left[ a,b\right] $. If $g:\left[ a,b\right] \mathbb{\rightarrow R}$
is integrable and symmetric to $(a+b)/2$ then the following equality for
fractional integrals holds%
\begin{eqnarray}
&&\left( \frac{f(a)+f(b)}{2}\right) \left[ J_{a+}^{\alpha
}g(b)+J_{b-}^{\alpha }g(a)\right] -\left[ J_{a+}^{\alpha }\left( fg\right)
(b)+J_{b-}^{\alpha }\left( fg\right) (a)\right]  \label{2-3} \\
&=&\frac{1}{\Gamma (\alpha )}\int_{a}^{b}\left[ \int_{a}^{t}\left(
b-s\right) ^{\alpha -1}g(s)ds-\int_{t}^{b}\left( s-a\right) ^{\alpha
-1}g(s)ds\right] f^{\prime }(t)dt  \notag
\end{eqnarray}%
with $\alpha >0.$
\end{lemma}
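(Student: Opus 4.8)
The plan is to prove the identity by integrating the right-hand side by parts, transferring the derivative off $f'$ and onto the bracketed kernel. First I would introduce the abbreviation
\[
K(t)=\int_{a}^{t}\left( b-s\right) ^{\alpha -1}g(s)\,ds-\int_{t}^{b}\left( s-a\right) ^{\alpha -1}g(s)\,ds,
\]
so that the right-hand side of \eqref{2-3} is $\frac{1}{\Gamma (\alpha )}\int_{a}^{b}K(t)f^{\prime }(t)\,dt$. Integration by parts then gives $\int_{a}^{b}K(t)f^{\prime }(t)\,dt=\left[ K(t)f(t)\right] _{a}^{b}-\int_{a}^{b}K^{\prime }(t)f(t)\,dt$, and the entire computation reduces to evaluating the boundary term and the remaining integral.

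For the kernel derivative, differentiating the two integrals with respect to their variable limits yields $K^{\prime }(t)=\left[ \left( b-t\right) ^{\alpha -1}+\left( t-a\right) ^{\alpha -1}\right] g(t)$. Splitting the integral $\int_{a}^{b}K^{\prime }(t)f(t)\,dt$ into its two pieces and recognizing each as a Riemann--Liouville integral of the product $fg$ produces $\Gamma (\alpha )\left[ J_{a+}^{\alpha }\left( fg\right) (b)+J_{b-}^{\alpha }\left( fg\right) (a)\right]$, which after division by $\Gamma (\alpha )$ accounts for the second (subtracted) term on the left-hand side of \eqref{2-3}.

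For the boundary term I would evaluate $K$ at the endpoints: $K(b)=\int_{a}^{b}\left( b-s\right) ^{\alpha -1}g(s)\,ds=\Gamma (\alpha )J_{a+}^{\alpha }g(b)$ and $K(a)=-\int_{a}^{b}\left( s-a\right) ^{\alpha -1}g(s)\,ds=-\Gamma (\alpha )J_{b-}^{\alpha }g(a)$, so that $\left[ K(t)f(t)\right] _{a}^{b}=\Gamma (\alpha )\left[ f(b)J_{a+}^{\alpha }g(b)+f(a)J_{b-}^{\alpha }g(a)\right]$. Note that, at this stage, the boundary contribution is genuinely asymmetric in how $f(a)$ and $f(b)$ are weighted, and does not yet match the factor $\frac{f(a)+f(b)}{2}$ appearing in the claim.

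The one place where the symmetry hypothesis on $g$ is essential, and the step I expect to require the most care, is converting this asymmetric boundary term into the symmetric factor $\frac{f(a)+f(b)}{2}\left[ J_{a+}^{\alpha }g(b)+J_{b-}^{\alpha }g(a)\right]$. Here I would invoke Lemma \ref{2.1}, which gives $J_{a+}^{\alpha }g(b)=J_{b-}^{\alpha }g(a)=\frac{1}{2}\left[ J_{a+}^{\alpha }g(b)+J_{b-}^{\alpha }g(a)\right]$; substituting this common value for both fractional integrals in the boundary term collapses $f(b)J_{a+}^{\alpha }g(b)+f(a)J_{b-}^{\alpha }g(a)$ into $\frac{f(a)+f(b)}{2}\left[ J_{a+}^{\alpha }g(b)+J_{b-}^{\alpha }g(a)\right]$. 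Multiplying by $\Gamma (\alpha )$ and then dividing the whole integration-by-parts identity through by $\Gamma (\alpha )$ matches the first term on the left-hand side of \eqref{2-3}, completing the proof.
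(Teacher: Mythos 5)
Your proposal is correct and follows essentially the same route as the paper: integration by parts to move the derivative from $f'$ onto the kernel, identification of the resulting integrals as $\Gamma(\alpha)\left[ J_{a+}^{\alpha}(fg)(b)+J_{b-}^{\alpha}(fg)(a)\right]$, and an appeal to Lemma \ref{2.1} to symmetrize the boundary term $f(b)J_{a+}^{\alpha}g(b)+f(a)J_{b-}^{\alpha}g(a)$ into $\frac{f(a)+f(b)}{2}\left[ J_{a+}^{\alpha}g(b)+J_{b-}^{\alpha}g(a)\right]$. The only cosmetic difference is that the paper first splits the kernel into two pieces $I_{1}+I_{2}$ and integrates each by parts separately, whereas you treat the combined kernel $K(t)$ at once; the computations are otherwise identical.
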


\begin{proof}
It suffices to note that%
\begin{eqnarray*}
I &=&\int_{a}^{b}\left[ \int_{a}^{t}\left( b-s\right) ^{\alpha
-1}g(s)ds-\int_{t}^{b}\left( s-a\right) ^{\alpha -1}g(s)ds\right] f^{\prime
}(t)dt \\
&=&\int_{a}^{b}\left( \int_{a}^{t}\left( b-s\right) ^{\alpha
-1}g(s)ds\right) f^{\prime }(t)dt+\int_{a}^{b}\left( -\int_{t}^{b}\left(
s-a\right) ^{\alpha -1}g(s)ds\right) f^{\prime }(t)dt \\
&=&I_{1}+I_{2}.
\end{eqnarray*}%
By integration by parts and Lemma \ref{2.1}\ we get%
\begin{eqnarray*}
I_{1} &=&\left. \left( \int_{a}^{t}\left( b-s\right) ^{\alpha
-1}g(s)ds\right) f(t)\right\vert _{a}^{b}-\int_{a}^{b}\left( b-t\right)
^{\alpha -1}g(t)f(t)dt \\
&=&\left( \int_{a}^{b}\left( b-s\right) ^{\alpha -1}g(s)ds\right)
f(b)-\int_{a}^{b}\left( b-t\right) ^{\alpha -1}(fg)(t)dt \\
&=&\Gamma (\alpha )\left[ f(b)J_{a+}^{\alpha }g(b)-J_{a+}^{\alpha }(fg)(b)%
\right] \\
&=&\Gamma (\alpha )\left[ \frac{f(b)}{2}\left[ J_{a+}^{\alpha
}g(b)+J_{b-}^{\alpha }g(a)\right] -J_{a+}^{\alpha }(fg)(b)\right]
\end{eqnarray*}%
and similarly%
\begin{eqnarray*}
I_{2} &=&\left. \left( -\int_{t}^{b}\left( s-a\right) ^{\alpha
-1}g(s)ds\right) f(t)\right\vert _{a}^{b}-\int_{a}^{b}\left( t-a\right)
^{\alpha -1}g(t)f(t)dt \\
&=&\left( \int_{a}^{b}\left( s-a\right) ^{\alpha -1}g(s)ds\right)
f(a)-\int_{a}^{b}\left( t-a\right) ^{\alpha -1}(fg)(t)dt \\
&=&\Gamma (\alpha )\left[ \frac{f(a)}{2}\left[ J_{a+}^{\alpha
}g(b)+J_{b-}^{\alpha }g(a)\right] -J_{b-}^{\alpha }\left( fg\right) (a)%
\right] .
\end{eqnarray*}%
Thus, we can write%
\begin{equation*}
I=I_{1}+I_{2}=\Gamma (\alpha )\left\{ \left( \frac{f(a)+f(b)}{2}\right) %
\left[ J_{a+}^{\alpha }g(b)+J_{b-}^{\alpha }g(a)\right] -\left[
J_{a+}^{\alpha }\left( fg\right) (b)+J_{b-}^{\alpha }\left( fg\right) (a)%
\right] \right\} .
\end{equation*}%
Multiplying the both sides by $\left( \Gamma (\alpha )\right) ^{-1}$ we
obtain (\ref{2-3}) which completes the proof.
\end{proof}

\begin{remark}
In Lemma \ref{2.3}, if we take $g(x)=1$, then equality (\ref{2-3}) becomes
equality (\ref{1-4}) of Lemma \ref{1.4}.
\end{remark}

\begin{theorem}
\label{2.4}Let $f:I\subseteq \mathbb{R\rightarrow R}$ be a differentiable
mapping on $I^{\circ }$ and $f^{\prime }\in L\left[ a,b\right] $ with $a<b$.
If $\left\vert f^{\prime }\right\vert $ is convex on $\left[ a,b\right] $
and $g:\left[ a,b\right] \mathbb{\rightarrow R}$ is continuous and symmetric
to $(a+b)/2$, then the following inequality for fractional integrals holds%
\begin{eqnarray}
&&\left\vert \left( \frac{f(a)+f(b)}{2}\right) \left[ J_{a+}^{\alpha
}g(b)+J_{b-}^{\alpha }g(a)\right] -\left[ J_{a+}^{\alpha }\left( fg\right)
(b)+J_{b-}^{\alpha }\left( fg\right) (a)\right] \right\vert  \notag \\
&\leq &\frac{\left( b-a\right) ^{\alpha +1}\left\Vert g\right\Vert _{\infty }%
}{\left( \alpha +1\right) \Gamma (\alpha +1)}\left( 1-\frac{1}{2^{\alpha }}%
\right) \left[ \left\vert f^{\prime }\left( a\right) \right\vert +\left\vert
f^{\prime }\left( b\right) \right\vert \right]  \label{2-4}
\end{eqnarray}%
with $\alpha >0.$
\end{theorem}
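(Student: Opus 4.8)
The plan is to derive (\ref{2-4}) from the identity of Lemma~\ref{2.3} by taking absolute values and then estimating the resulting integral through the convexity of $\left\vert f^{\prime }\right\vert $ and the symmetry of $g$. Denote the inner kernel by
\[
K(t)=\int_{a}^{t}\left( b-s\right) ^{\alpha -1}g(s)\,ds-\int_{t}^{b}\left( s-a\right) ^{\alpha -1}g(s)\,ds .
\]
Then the identity (\ref{2-3}) together with the triangle inequality yields the starting bound
\[
\left\vert \left( \frac{f(a)+f(b)}{2}\right) \left[ J_{a+}^{\alpha }g(b)+J_{b-}^{\alpha }g(a)\right] -\left[ J_{a+}^{\alpha }\left( fg\right) (b)+J_{b-}^{\alpha }\left( fg\right) (a)\right] \right\vert \leq \frac{1}{\Gamma (\alpha )}\int_{a}^{b}\left\vert K(t)\right\vert \left\vert f^{\prime }(t)\right\vert \,dt .
\]

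The decisive step, and the one I expect to be the main obstacle, is to estimate $\left\vert K(t)\right\vert $ sharply rather than by the crude bound $\left\vert K(t)\right\vert \leq \int_{a}^{t}\left( b-s\right) ^{\alpha -1}\left\vert g(s)\right\vert \,ds+\int_{t}^{b}\left( s-a\right) ^{\alpha -1}\left\vert g(s)\right\vert \,ds$, which is too lossy and destroys the factor $1-2^{-\alpha }$. Instead I would use the symmetry $g(a+b-s)=g(s)$: substituting $s\mapsto a+b-s$ in the second integral converts it into $\int_{a}^{a+b-t}\left( b-s\right) ^{\alpha -1}g(s)\,ds$, so that the two terms partially cancel and
\[
K(t)=-\int_{t}^{a+b-t}\left( b-s\right) ^{\alpha -1}g(s)\,ds .
\]
Bounding $\left\vert g\right\vert \leq \left\Vert g\right\Vert _{\infty }$ and integrating the power then gives $\left\vert K(t)\right\vert \leq \frac{\left\Vert g\right\Vert _{\infty }}{\alpha }\left\vert \left( b-t\right) ^{\alpha }-\left( t-a\right) ^{\alpha }\right\vert $, a bound that is invariant under $t\mapsto a+b-t$.

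To finish, I would insert the convexity estimate $\left\vert f^{\prime }(t)\right\vert \leq \frac{b-t}{b-a}\left\vert f^{\prime }(a)\right\vert +\frac{t-a}{b-a}\left\vert f^{\prime }(b)\right\vert $, valid for $t\in \lbrack a,b]$. Since the weight $\left\vert \left( b-t\right) ^{\alpha }-\left( t-a\right) ^{\alpha }\right\vert $ is symmetric about $(a+b)/2$, the change of variable $t\mapsto a+b-t$ shows that the coefficients of $\left\vert f^{\prime }(a)\right\vert $ and $\left\vert f^{\prime }(b)\right\vert $ coincide, so each equals one half of $\int_{a}^{b}\left\vert \left( b-t\right) ^{\alpha }-\left( t-a\right) ^{\alpha }\right\vert \,dt$. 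This last integral is elementary: splitting at the midpoint $(a+b)/2$ and evaluating the two powers gives $\frac{2\left( b-a\right) ^{\alpha +1}}{\alpha +1}\left( 1-\frac{1}{2^{\alpha }}\right) $. Collecting the constants and using $\alpha \Gamma (\alpha )=\Gamma (\alpha +1)$ then reproduces exactly the right-hand side of (\ref{2-4}). The only points requiring care are the direction of the cancellation in $K(t)$, which is precisely what produces the $1-2^{-\alpha }$ factor, and the bookkeeping of the prefactor $\frac{1}{\Gamma (\alpha )}\cdot \frac{1}{2}\cdot \frac{1}{\alpha }$.
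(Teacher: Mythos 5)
Your proposal is correct and follows essentially the same route as the paper: both start from Lemma \ref{2.3}, use the symmetry of $g$ to collapse the kernel to $\int_{t}^{a+b-t}\left( b-s\right) ^{\alpha -1}g(s)\,ds$, bound $g$ by $\left\Vert g\right\Vert _{\infty }$, and apply the convexity of $\left\vert f^{\prime }\right\vert $. The only difference is cosmetic bookkeeping: you evaluate the single symmetric integral $\int_{a}^{b}\left\vert \left( b-t\right) ^{\alpha }-\left( t-a\right) ^{\alpha }\right\vert dt$ via the substitution $t\mapsto a+b-t$, whereas the paper computes the weighted integrals (\ref{2-4e}) and (\ref{2-4f}) separately; both give the same constant.
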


\begin{proof}
From Lemma \ref{2.3} we have%
\begin{eqnarray}
&&\left\vert \left( \frac{f(a)+f(b)}{2}\right) \left[ J_{a+}^{\alpha
}g(b)+J_{b-}^{\alpha }g(a)\right] -\left[ J_{a+}^{\alpha }\left( fg\right)
(b)+J_{b-}^{\alpha }\left( fg\right) (a)\right] \right\vert  \notag \\
&\leq &\frac{1}{\Gamma (\alpha )}\int_{a}^{b}\left\vert \int_{a}^{t}\left(
b-s\right) ^{\alpha -1}g(s)ds-\int_{t}^{b}\left( s-a\right) ^{\alpha
-1}g(s)ds\right\vert \left\vert f^{\prime }(t)\right\vert dt.  \label{2-4a}
\end{eqnarray}%
Since $\left\vert f^{\prime }\right\vert $ is convex on $\left[ a,b\right] $%
, we know that for $t\in \left[ a,b\right] $%
\begin{equation}
\left\vert f^{\prime }(t)\right\vert =\left\vert f^{\prime }\left( \frac{b-t%
}{b-a}a+\frac{t-a}{b-a}b\right) \right\vert \leq \frac{b-t}{b-a}\left\vert
f^{\prime }\left( a\right) \right\vert +\frac{t-a}{b-a}\left\vert f^{\prime
}\left( b\right) \right\vert ,  \label{2-4b}
\end{equation}%
and since $g:\left[ a,b\right] \mathbb{\rightarrow R}$ is symmetric to $%
(a+b)/2$ we write%
\begin{equation*}
\int_{t}^{b}\left( s-a\right) ^{\alpha -1}g(s)ds=\int_{a}^{a+b-t}\left(
b-s\right) ^{\alpha -1}g(a+b-s)ds=\int_{a}^{a+b-t}\left( b-s\right) ^{\alpha
-1}g(s)ds,
\end{equation*}%
then we have%
\begin{eqnarray}
&&\left\vert \int_{a}^{t}\left( b-s\right) ^{\alpha
-1}g(s)ds-\int_{t}^{b}\left( s-a\right) ^{\alpha -1}g(s)ds\right\vert  \notag
\\
&=&\left\vert \int_{t}^{a+b-t}\left( b-s\right) ^{\alpha -1}g(s)ds\right\vert
\notag \\
&\leq &\left\{ 
\begin{array}{cc}
\int_{t}^{a+b-t}\left\vert \left( b-s\right) ^{\alpha -1}g(s)\right\vert ds
& t\in \left[ a,\frac{a+b}{2}\right] \\ 
\int_{a+b-t}^{t}\left\vert \left( b-s\right) ^{\alpha -1}g(s)\right\vert ds
& t\in \left[ \frac{a+b}{2},b\right]%
\end{array}%
\right. .  \label{2-4c}
\end{eqnarray}%
A combination of (\ref{2-4a}), (\ref{2-4b}) and (\ref{2-4c}), we get 
\begin{eqnarray}
&&\left\vert \left( \frac{f(a)+f(b)}{2}\right) \left[ J_{a+}^{\alpha
}g(b)+J_{b-}^{\alpha }g(a)\right] -\left[ J_{a+}^{\alpha }\left( fg\right)
(b)+J_{b-}^{\alpha }\left( fg\right) (a)\right] \right\vert  \notag \\
&\leq &\frac{1}{\Gamma (\alpha )}\int_{a}^{\frac{a+b}{2}}\left(
\int_{t}^{a+b-t}\left\vert \left( b-s\right) ^{\alpha -1}g(s)\right\vert
ds\right) \left( \frac{b-t}{b-a}\left\vert f^{\prime }\left( a\right)
\right\vert +\frac{t-a}{b-a}\left\vert f^{\prime }\left( b\right)
\right\vert \right) dt  \notag \\
&&+\frac{1}{\Gamma (\alpha )}\int_{\frac{a+b}{2}}^{b}\left(
\int_{a+b-t}^{t}\left\vert \left( b-s\right) ^{\alpha -1}g(s)\right\vert
ds\right) \left( \frac{b-t}{b-a}\left\vert f^{\prime }\left( a\right)
\right\vert +\frac{t-a}{b-a}\left\vert f^{\prime }\left( b\right)
\right\vert \right) dt  \notag \\
&\leq &\frac{\left\Vert g\right\Vert _{\infty }}{\left( b-a\right) \Gamma
(\alpha +1)}\left\{ \int_{a}^{\frac{a+b}{2}}\left[ \left( b-t\right)
^{\alpha }-\left( t-a\right) ^{\alpha }\right] \left( \left( b-t\right)
\left\vert f^{\prime }\left( a\right) \right\vert +\left( t-a\right)
\left\vert f^{\prime }\left( b\right) \right\vert \right) dt\right.  \notag
\\
&&\left. +\int_{\frac{a+b}{2}}^{b}\left[ \left( t-a\right) ^{\alpha }-\left(
b-t\right) ^{\alpha }\right] \left( \left( b-t\right) \left\vert f^{\prime
}\left( a\right) \right\vert +\left( t-a\right) \left\vert f^{\prime }\left(
b\right) \right\vert \right) dt\right\}  \label{2-4d}
\end{eqnarray}%
Since%
\begin{eqnarray}
&&\int_{a}^{\frac{a+b}{2}}\left[ \left( b-t\right) ^{\alpha }-\left(
t-a\right) ^{\alpha }\right] \left( b-t\right) dt  \notag \\
&=&\int_{\frac{a+b}{2}}^{b}\left[ \left( t-a\right) ^{\alpha }-\left(
b-t\right) ^{\alpha }\right] \left( t-a\right) dt  \notag \\
&=&\frac{\left( b-a\right) ^{\alpha +2}}{\left( \alpha +1\right) }\left( 
\frac{\alpha +1}{\alpha +2}-\frac{1}{2^{\alpha +1}}\right)  \label{2-4e}
\end{eqnarray}%
and 
\begin{eqnarray}
&&\int_{a}^{\frac{a+b}{2}}\left[ \left( b-t\right) ^{\alpha }-\left(
t-a\right) ^{\alpha }\right] \left( t-a\right) dt  \notag \\
&=&\int_{\frac{a+b}{2}}^{b}\left[ \left( t-a\right) ^{\alpha }-\left(
b-t\right) ^{\alpha }\right] \left( b-t\right) dt  \notag \\
&=&\frac{\left( b-a\right) ^{\alpha +2}}{\left( \alpha +1\right) }\left( 
\frac{1}{\alpha +2}-\frac{1}{2^{\alpha +1}}\right)  \label{2-4f}
\end{eqnarray}%
Hence, if we use (\ref{2-4e}) and (\ref{2-4f}) in (\ref{2-4d}), we obtain
the desired result. This completes the proof.
\end{proof}

\begin{remark}
In Theorem \ref{2.4}, if we take $g(x)=1$, then equality (\ref{2-4}) becomes
equality (\ref{1-5}) of Theorem \ref{1.5}.
\end{remark}

\begin{theorem}
Let $f:I\subseteq \mathbb{R\rightarrow R}$ be a differentiable mapping on $%
I^{\circ }$ and $f^{\prime }\in L\left[ a,b\right] $ with $a<b$. If $%
\left\vert f^{\prime }\right\vert ^{q},q>1,$ is convex on $\left[ a,b\right] 
$ and $g:\left[ a,b\right] \mathbb{\rightarrow R}$ is continuous and
symmetric to $(a+b)/2$, then the following inequality for fractional
integrals holds%
\begin{equation}
\left\vert \left( \frac{f(a)+f(b)}{2}\right) \left[ J_{a+}^{\alpha
}g(b)+J_{b-}^{\alpha }g(a)\right] -\left[ J_{a+}^{\alpha }\left( fg\right)
(b)+J_{b-}^{\alpha }\left( fg\right) (a)\right] \right\vert  \label{2-5}
\end{equation}%
\begin{equation*}
\leq \frac{2\left( b-a\right) ^{\alpha +1}\left\Vert g\right\Vert _{\infty }%
}{\left( b-a\right) ^{1/q}(\alpha +1)\Gamma (\alpha +1)}\left( 1-\frac{1}{%
2^{\alpha }}\right) \left( \frac{\left\vert f^{\prime }\left( a\right)
\right\vert ^{q}+\left\vert f^{\prime }\left( b\right) \right\vert ^{q}}{2}%
\right) ^{1/q}
\end{equation*}%
where $\alpha >0$ and $1/p+1/q=1.$
\end{theorem}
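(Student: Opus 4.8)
The plan is to run the argument in exact parallel with the proof of Theorem \ref{2.4} up to the point where convexity is invoked, and to replace the single use of the linear bound by a suitable application of H\"older's inequality. Concretely, I start from the identity of Lemma \ref{2.3} and pass to absolute values, so that the left-hand side of (\ref{2-5}) is dominated by $\frac{1}{\Gamma (\alpha )}\int_{a}^{b}\left\vert K(t)\right\vert \left\vert f^{\prime }(t)\right\vert dt$, where $K(t)=\int_{a}^{t}\left( b-s\right) ^{\alpha -1}g(s)ds-\int_{t}^{b}\left( s-a\right) ^{\alpha -1}g(s)ds$. Using the symmetry of $g$ about $(a+b)/2$ in the form $\int_{t}^{b}\left( s-a\right) ^{\alpha -1}g(s)ds=\int_{a}^{a+b-t}\left( b-s\right) ^{\alpha -1}g(s)ds$, I collapse $K(t)$ to the single integral $\int_{t}^{a+b-t}\left( b-s\right) ^{\alpha -1}g(s)ds$ and bound it, exactly as in (\ref{2-4c}) together with $\left\vert g\right\vert \leq \left\Vert g\right\Vert _{\infty }$ and the elementary evaluation $\int_{t}^{a+b-t}\left( b-s\right) ^{\alpha -1}ds=\frac{1}{\alpha }\left[ \left( b-t\right) ^{\alpha }-\left( t-a\right) ^{\alpha }\right] $, by $\left\vert K(t)\right\vert \leq \frac{\left\Vert g\right\Vert _{\infty }}{\alpha }\left\vert \left( b-t\right) ^{\alpha }-\left( t-a\right) ^{\alpha }\right\vert $.

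The decisive step, and the only place where the hypothesis $\left\vert f^{\prime }\right\vert ^{q}$ convex is used in place of $\left\vert f^{\prime }\right\vert $ convex, is the way the product $\left\vert K(t)\right\vert \left\vert f^{\prime }(t)\right\vert $ is split. Instead of inserting the estimate (\ref{2-4b}) directly, I apply H\"older's inequality in its weighted (power-mean) form with weight $\left\vert K(t)\right\vert $:
\[
\int_{a}^{b}\left\vert K(t)\right\vert \left\vert f^{\prime }(t)\right\vert dt\leq \left( \int_{a}^{b}\left\vert K(t)\right\vert dt\right) ^{1/p}\left( \int_{a}^{b}\left\vert K(t)\right\vert \left\vert f^{\prime }(t)\right\vert ^{q}dt\right) ^{1/q}.
\]
The point of grouping the kernel this way is that both resulting integrals are ones already computed in the proof of Theorem \ref{2.4}: the first is a bare kernel integral, and the second, after inserting the convexity estimate $\left\vert f^{\prime }(t)\right\vert ^{q}\leq \frac{b-t}{b-a}\left\vert f^{\prime }(a)\right\vert ^{q}+\frac{t-a}{b-a}\left\vert f^{\prime }(b)\right\vert ^{q}$, is controlled by the very integrals (\ref{2-4e}) and (\ref{2-4f}).

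For the first factor I use the symmetry of $\left\vert \left( b-t\right) ^{\alpha }-\left( t-a\right) ^{\alpha }\right\vert $ about $(a+b)/2$ to obtain $\int_{a}^{b}\left\vert \left( b-t\right) ^{\alpha }-\left( t-a\right) ^{\alpha }\right\vert dt=2\int_{a}^{(a+b)/2}\left[ \left( b-t\right) ^{\alpha }-\left( t-a\right) ^{\alpha }\right] dt=\frac{2\left( b-a\right) ^{\alpha +1}}{\alpha +1}\left( 1-\frac{1}{2^{\alpha }}\right) $. For the second factor, the same symmetry forces the coefficients of $\left\vert f^{\prime }(a)\right\vert ^{q}$ and $\left\vert f^{\prime }(b)\right\vert ^{q}$ to coincide, and their common value is read off directly from (\ref{2-4e})--(\ref{2-4f}). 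Substituting the two evaluations into the H\"older estimate, collecting the factor $2^{1/p}2^{1/q}=2$ (valid since $1/p+1/q=1$) so as to form the normalized mean $\left( \frac{\left\vert f^{\prime }(a)\right\vert ^{q}+\left\vert f^{\prime }(b)\right\vert ^{q}}{2}\right) ^{1/q}$, and finally using $\alpha \Gamma (\alpha )=\Gamma (\alpha +1)$ to absorb the prefactor $1/\Gamma (\alpha )$, produces the claimed right-hand side.

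The obstacle I anticipate is bookkeeping rather than conceptual: one must keep the two-case description of the kernel (according to the sign of $t-(a+b)/2$) consistent through the symmetry manipulations, and arrange the H\"older split so that the non-closed-form quantity $\int_{a}^{b}\left\vert \left( b-t\right) ^{\alpha }-\left( t-a\right) ^{\alpha }\right\vert ^{p}dt$ never appears. This is precisely why the weighted form of H\"older is preferable here to the ordinary form $\left( \int \left\vert K\right\vert ^{p}\right) ^{1/p}\left( \int \left\vert f^{\prime }\right\vert ^{q}\right) ^{1/q}$: the ordinary form would raise the kernel to the power $p$, which cannot be integrated in closed form and would not reproduce the clean factor $1-2^{-\alpha }$ that the statement demands.
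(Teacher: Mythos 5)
Your proposal follows essentially the same route as the paper's own proof: the identity of Lemma \ref{2.3}, the weighted (power--mean) form of H\"{o}lder's inequality with the kernel $\left\vert K(t)\right\vert$ as weight, the two--case estimate (\ref{2-4c}) together with $\left\vert g\right\vert \leq \left\Vert g\right\Vert _{\infty }$, and the evaluations (\ref{2-4e})--(\ref{2-4f}); your closing remark about why the weighted form is preferable to the plain form $\left( \int \left\vert K\right\vert ^{p}\right) ^{1/p}\left( \int \left\vert f^{\prime }\right\vert ^{q}\right) ^{1/q}$ is exactly the point of the paper's choice. One caveat: if you track the powers of $b-a$ to the end, your computation (and the paper's) yields
\begin{equation*}
\frac{2\left( b-a\right) ^{\alpha +1}\left\Vert g\right\Vert _{\infty }}{(\alpha +1)\Gamma (\alpha +1)}\left( 1-\frac{1}{2^{\alpha }}\right) \left( \frac{\left\vert f^{\prime }\left( a\right) \right\vert ^{q}+\left\vert f^{\prime }\left( b\right) \right\vert ^{q}}{2}\right) ^{1/q},
\end{equation*}
i.e.\ without the factor $\left( b-a\right) ^{1/q}$ in the denominator, because the $\left( b-a\right) ^{-1/q}$ coming from the convexity estimate is cancelled by the extra power of $b-a$ in (\ref{2-4e})--(\ref{2-4f}) relative to the first H\"{o}lder factor; so the right--hand side as printed in the statement (which is dimensionally inconsistent with the left--hand side and is not implied by the argument when $b-a>1$) is not what either derivation actually produces, and your claim that the computation ``produces the claimed right--hand side'' should be amended accordingly.
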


\begin{proof}
Using Lemma \ref{2.3}, H\"{o}lder's inequality, (\ref{2-4c}) and the
convexity of $\left\vert f^{\prime }\right\vert ^{q}$, it follows that%
\begin{eqnarray}
&&\left\vert \left( \frac{f(a)+f(b)}{2}\right) \left[ J_{a+}^{\alpha
}g(b)+J_{b-}^{\alpha }g(a)\right] -\left[ J_{a+}^{\alpha }\left( fg\right)
(b)+J_{b-}^{\alpha }\left( fg\right) (a)\right] \right\vert  \notag \\
&\leq &\frac{1}{\Gamma (\alpha )}\left( \int_{a}^{b}\left\vert
\int_{t}^{a+b-t}\left( b-s\right) ^{\alpha -1}g(s)ds\right\vert dt\right)
^{1-1/q}\left( \int_{a}^{b}\left\vert \int_{t}^{a+b-t}\left( b-s\right)
^{\alpha -1}g(s)ds\right\vert \left\vert f^{\prime }\left( t\right)
\right\vert ^{q}dt\right) ^{1/q}  \notag \\
&\leq &\frac{1}{\Gamma (\alpha )}\left[ \int_{a}^{\frac{a+b}{2}}\left(
\int_{t}^{a+b-t}\left\vert \left( b-s\right) ^{\alpha -1}g(s)\right\vert
ds\right) dt+\int_{\frac{a+b}{2}}^{b}\left( \int_{a+b-t}^{t}\left\vert
\left( b-s\right) ^{\alpha -1}g(s)\right\vert ds\right) dt\right] ^{1-1/q} 
\notag \\
&&\times \left[ \int_{a}^{\frac{a+b}{2}}\left( \int_{t}^{a+b-t}\left\vert
\left( b-s\right) ^{\alpha -1}g(s)\right\vert ds\right) \left\vert f^{\prime
}\left( t\right) \right\vert ^{q}dt+\int_{\frac{a+b}{2}}^{b}\left(
\int_{a+b-t}^{t}\left\vert \left( b-s\right) ^{\alpha -1}g(s)\right\vert
ds\right) \left\vert f^{\prime }\left( t\right) \right\vert ^{q}dt\right]
^{1/q}  \notag
\end{eqnarray}%
\begin{eqnarray}
&\leq &\frac{2^{1-1/q}\left\Vert g\right\Vert _{\infty }}{\left( b-a\right)
^{1/q}\Gamma (\alpha +1)}\left( \frac{\left( b-a\right) ^{\alpha +1}}{\alpha
+1}\left[ 1-\frac{1}{2^{\alpha }}\right] \right) ^{1-1/q}  \notag \\
&&\times \left\{ \int_{a}^{\frac{a+b}{2}}\left[ \left( b-t\right) ^{\alpha
}-\left( t-a\right) ^{\alpha }\right] \left( \left( b-t\right) \left\vert
f^{\prime }\left( a\right) \right\vert ^{q}+\left( t-a\right) \left\vert
f^{\prime }\left( b\right) \right\vert ^{q}\right) dt\right.  \notag \\
&&\left. +\int_{\frac{a+b}{2}}^{b}\left[ \left( t-a\right) ^{\alpha }-\left(
b-t\right) ^{\alpha }\right] \left( \left( b-t\right) \left\vert f^{\prime
}\left( a\right) \right\vert ^{q}+\left( t-a\right) \left\vert f^{\prime
}\left( b\right) \right\vert ^{q}\right) dt\right\} ^{1/q}  \label{2-5a}
\end{eqnarray}%
where it is easily seen that%
\begin{eqnarray*}
&&\int_{a}^{\frac{a+b}{2}}\left( \int_{t}^{a+b-t}\left( b-s\right) ^{\alpha
-1}ds\right) dt+\int_{\frac{a+b}{2}}^{b}\left( \int_{a+b-t}^{t}\left(
b-s\right) ^{\alpha -1}ds\right) dt \\
&=&\frac{2\left( b-a\right) ^{\alpha +1}}{\alpha \left( \alpha +1\right) }%
\left[ 1-\frac{1}{2^{\alpha }}\right] .
\end{eqnarray*}

Hence, if we use (\ref{2-4e}) and (\ref{2-4f}) in (\ref{2-5a}), we obtain
the desired result. This completes the proof.
\end{proof}

We can state another inequality for $q>1$ as follows:

\begin{theorem}
\label{2.6}Let $f:I\subseteq \mathbb{R\rightarrow R}$ be a differentiable
mapping on $I^{\circ }$ and $f^{\prime }\in L\left[ a,b\right] $ with $a<b$.
If $\left\vert f^{\prime }\right\vert ^{q},q>1,$ is convex on $\left[ a,b%
\right] $ and $g:\left[ a,b\right] \mathbb{\rightarrow R}$ is continuous and
symmetric to $(a+b)/2$, then the following inequalities for fractional
integrals hold:

(i)%
\begin{eqnarray}
&&\left\vert \left( \frac{f(a)+f(b)}{2}\right) \left[ J_{a+}^{\alpha
}g(b)+J_{b-}^{\alpha }g(a)\right] -\left[ J_{a+}^{\alpha }\left( fg\right)
(b)+J_{b-}^{\alpha }\left( fg\right) (a)\right] \right\vert  \notag \\
&\leq &\frac{2^{1/p}\left\Vert g\right\Vert _{\infty }\left( b-a\right)
^{\alpha +1}}{(\alpha p+1)^{1/p}\Gamma (\alpha +1)}\left( 1-\frac{1}{%
2^{\alpha p}}\right) ^{1/p}\left( \frac{\left\vert f^{\prime }\left(
a\right) \right\vert ^{q}+\left\vert f^{\prime }\left( b\right) \right\vert
^{q}}{2}\right) ^{1/q}  \label{2-6}
\end{eqnarray}%
with $\alpha >0.$

(ii)%
\begin{eqnarray}
&&\left\vert \left( \frac{f(a)+f(b)}{2}\right) \left[ J_{a+}^{\alpha
}g(b)+J_{b-}^{\alpha }g(a)\right] -\left[ J_{a+}^{\alpha }\left( fg\right)
(b)+J_{b-}^{\alpha }\left( fg\right) (a)\right] \right\vert  \notag \\
&\leq &\frac{\left\Vert g\right\Vert _{\infty }\left( b-a\right) ^{\alpha +1}%
}{(\alpha p+1)^{1/p}\Gamma (\alpha +1)}\left( \frac{\left\vert f^{\prime
}\left( a\right) \right\vert ^{q}+\left\vert f^{\prime }\left( b\right)
\right\vert ^{q}}{2}\right) ^{1/q}  \label{2-7}
\end{eqnarray}%
for $0<\alpha \leq 1.$ Where $1/p+1/q=1.$
\end{theorem}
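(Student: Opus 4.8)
The plan is to start, as in the proof of Theorem \ref{2.4}, from the identity of Lemma \ref{2.3}, pass to absolute values, and exploit the symmetry of $g$ about $(a+b)/2$ to rewrite the inner bracket as a single integral over $\left[ t,a+b-t\right] $. Taking moduli in (\ref{2-3}) and using
\[
\left\vert \int_{a}^{t}\left( b-s\right) ^{\alpha -1}g(s)\,ds-\int_{t}^{b}\left( s-a\right) ^{\alpha -1}g(s)\,ds\right\vert =\left\vert \int_{t}^{a+b-t}\left( b-s\right) ^{\alpha -1}g(s)\,ds\right\vert ,
\]
the left-hand side of (\ref{2-6}) is at most $\frac{1}{\Gamma (\alpha )}\int_{a}^{b}\left\vert \int_{t}^{a+b-t}\left( b-s\right) ^{\alpha -1}g(s)\,ds\right\vert \left\vert f^{\prime }(t)\right\vert \,dt$. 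The crucial departure from Theorem \ref{2.4} is to apply H\"{o}lder's inequality \emph{directly} with exponents $p$ and $q$, keeping $\left\vert f^{\prime }\right\vert $ in its own factor:
\[
\frac{1}{\Gamma (\alpha )}\left( \int_{a}^{b}\left\vert \int_{t}^{a+b-t}\left( b-s\right) ^{\alpha -1}g(s)\,ds\right\vert ^{p}dt\right) ^{1/p}\left( \int_{a}^{b}\left\vert f^{\prime }(t)\right\vert ^{q}dt\right) ^{1/q}.
\]

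The second factor is treated uniformly for both parts: convexity of $\left\vert f^{\prime }\right\vert ^{q}$ gives the pointwise bound (\ref{2-4b}) with $q$-th powers, and integrating over $\left[ a,b\right] $ yields $\int_{a}^{b}\left\vert f^{\prime }(t)\right\vert ^{q}dt\leq (b-a)\frac{\left\vert f^{\prime }(a)\right\vert ^{q}+\left\vert f^{\prime }(b)\right\vert ^{q}}{2}$, which supplies the common factor $(b-a)^{1/q}\left( \frac{\left\vert f^{\prime }(a)\right\vert ^{q}+\left\vert f^{\prime }(b)\right\vert ^{q}}{2}\right) ^{1/q}$ appearing in both (\ref{2-6}) and (\ref{2-7}).

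The two parts then diverge only in the estimate of the first factor. Using $\left\vert g\right\vert \leq \left\Vert g\right\Vert _{\infty }$ and evaluating $\int_{t}^{a+b-t}\left( b-s\right) ^{\alpha -1}ds$ gives $\left\vert \int_{t}^{a+b-t}\left( b-s\right) ^{\alpha -1}g(s)\,ds\right\vert \leq \frac{\left\Vert g\right\Vert _{\infty }}{\alpha }\left\vert \left( b-t\right) ^{\alpha }-\left( t-a\right) ^{\alpha }\right\vert $. For part (i) I will use the elementary inequality $(x-y)^{p}\leq x^{p}-y^{p}$, valid for $x\geq y\geq 0$ and $p\geq 1$ (seen by differentiating $x\mapsto x^{p}-y^{p}-(x-y)^{p}$), applied with $x=(b-t)^{\alpha }$ and $y=(t-a)^{\alpha }$ on $\left[ a,\frac{a+b}{2}\right] $ and symmetrically on $\left[ \frac{a+b}{2},b\right] $, to obtain $\left\vert \left( b-t\right) ^{\alpha }-\left( t-a\right) ^{\alpha }\right\vert ^{p}\leq \left\vert \left( b-t\right) ^{\alpha p}-\left( t-a\right) ^{\alpha p}\right\vert $; the resulting integral equals $\frac{2\left( b-a\right) ^{\alpha p+1}}{\alpha p+1}\left( 1-2^{-\alpha p}\right) $, producing the factor $2^{1/p}(\alpha p+1)^{-1/p}\left( 1-2^{-\alpha p}\right) ^{1/p}$ of (\ref{2-6}). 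For part (ii), which is exactly where the hypothesis $0<\alpha \leq 1$ is needed, I instead invoke Lemma \ref{1.6} in the form $\left\vert \left( b-t\right) ^{\alpha }-\left( t-a\right) ^{\alpha }\right\vert \leq \left\vert a+b-2t\right\vert ^{\alpha }$, so that the first factor is controlled by $\int_{a}^{b}\left\vert a+b-2t\right\vert ^{\alpha p}dt=\frac{\left( b-a\right) ^{\alpha p+1}}{\alpha p+1}$, giving the cleaner constant of (\ref{2-7}). Collecting the powers of $(b-a)$ through $\alpha +1/p+1/q=\alpha +1$ and using $\alpha \Gamma (\alpha )=\Gamma (\alpha +1)$ completes both estimates.

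I expect the only genuine subtlety to be this first-factor estimate: one must justify $(x-y)^{p}\leq x^{p}-y^{p}$ for part (i), match it correctly against Lemma \ref{1.6} for part (ii), and then carry out the two symmetric integral evaluations over $\left[ a,\frac{a+b}{2}\right] $ and $\left[ \frac{a+b}{2},b\right] $ without sign errors. Everything else is a routine repackaging of the H\"{o}lder estimate already rehearsed in the preceding theorem.
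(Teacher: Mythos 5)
Your proposal is correct and follows essentially the same route as the paper: apply H\"older with exponents $p$ and $q$ directly to the identity of Lemma \ref{2.3} rewritten via the symmetry of $g$, bound the $f'$-factor by convexity of $\left\vert f'\right\vert^{q}$, and then estimate the $g$-factor using $(A-B)^{p}\le A^{p}-B^{p}$ for part (i) and Lemma \ref{1.6} for part (ii). The only difference is cosmetic (you work on $[a,b]$ where the paper substitutes to $[0,1]$ before invoking these elementary inequalities), and your constants check out.
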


\begin{proof}
(i) Using Lemma \ref{2.3}, H\"{o}lder's inequality, (\ref{2-4c}) and the
convexity of $\left\vert f^{\prime }\right\vert ^{q}$, it follows that%
\begin{eqnarray*}
&&\left\vert \left( \frac{f(a)+f(b)}{2}\right) \left[ J_{a+}^{\alpha
}g(b)+J_{b-}^{\alpha }g(a)\right] -\left[ J_{a+}^{\alpha }\left( fg\right)
(b)+J_{b-}^{\alpha }\left( fg\right) (a)\right] \right\vert \\
&\leq &\frac{1}{\Gamma (\alpha )}\left( \int_{a}^{b}\left\vert
\int_{t}^{a+b-t}\left( b-s\right) ^{\alpha -1}g(s)ds\right\vert
^{p}dt\right) ^{1/p}\left( \int_{a}^{b}\left\vert f^{\prime }(t)\right\vert
^{q}dt\right) ^{1/q}. \\
&\leq &\frac{\left\Vert g\right\Vert _{\infty }}{\Gamma (\alpha +1)}\left(
\int_{a}^{\frac{a+b}{2}}\left[ \left( b-t\right) ^{\alpha }-\left(
t-a\right) ^{\alpha }\right] ^{p}dt+\int_{\frac{a+b}{2}}^{b}\left[ \left(
t-a\right) ^{\alpha }-\left( b-t\right) ^{\alpha }\right] ^{p}dt\right)
^{1/p} \\
&&\times \left( \int_{a}^{b}\left( \frac{b-t}{b-a}\left\vert f^{\prime
}\left( a\right) \right\vert ^{q}+\frac{t-a}{b-a}\left\vert f^{\prime
}\left( b\right) \right\vert ^{q}\right) dt\right) ^{1/q}
\end{eqnarray*}%
\begin{eqnarray}
&=&\frac{\left\Vert g\right\Vert _{\infty }\left( b-a\right) ^{\alpha +1}}{%
\Gamma (\alpha +1)}\left( \int_{0}^{\frac{1}{2}}\left[ \left( 1-t\right)
^{\alpha }-t^{\alpha }\right] ^{p}dt+\int_{\frac{1}{2}}^{1}\left[ t^{\alpha
}-\left( 1-t\right) ^{\alpha }\right] ^{p}dt\right) ^{1/p}  \notag \\
&&\times \left( \frac{\left\vert f^{\prime }\left( a\right) \right\vert
^{q}+\left\vert f^{\prime }\left( b\right) \right\vert ^{q}}{2}\right) ^{1/q}
\label{2-6a}
\end{eqnarray}%
\begin{eqnarray*}
&\leq &\frac{\left\Vert g\right\Vert _{\infty }\left( b-a\right) ^{\alpha +1}%
}{\Gamma (\alpha +1)}\left( \int_{0}^{\frac{1}{2}}\left[ \left( 1-t\right)
^{\alpha p}-t^{\alpha p}\right] dt+\int_{\frac{1}{2}}^{1}\left[ t^{\alpha
p}-\left( 1-t\right) ^{\alpha p}\right] dt\right) ^{1/p} \\
&&\times \left( \frac{\left\vert f^{\prime }\left( a\right) \right\vert
^{q}+\left\vert f^{\prime }\left( b\right) \right\vert ^{q}}{2}\right) ^{1/q}
\end{eqnarray*}%
\begin{equation*}
\leq \frac{\left\Vert g\right\Vert _{\infty }\left( b-a\right) ^{\alpha +1}}{%
\Gamma (\alpha +1)}\left( \frac{2}{\alpha p+1}\left[ 1-\frac{1}{2^{\alpha p}}%
\right] \right) ^{1/p}\left( \frac{\left\vert f^{\prime }\left( a\right)
\right\vert ^{q}+\left\vert f^{\prime }\left( b\right) \right\vert ^{q}}{2}%
\right) ^{1/q}.
\end{equation*}%
Here we use%
\begin{equation*}
\left[ \left( 1-t\right) ^{\alpha }-t^{\alpha }\right] ^{p}\leq \left(
1-t\right) ^{\alpha p}-t^{\alpha p}
\end{equation*}%
for $t\in \left[ 0,1/2\right] $ and 
\begin{equation*}
\left[ t^{\alpha }-\left( 1-t\right) ^{\alpha }\right] ^{p}\leq t^{\alpha
p}-\left( 1-t\right) ^{\alpha p}
\end{equation*}%
for $t\in \left[ 1/2,1\right] $, which follows from 
\begin{equation*}
\left( A-B\right) ^{q}\leq A^{q}-B^{q},
\end{equation*}%
for any $A\geq B\geq 0$ and $q\geq 1$. Hence the inequality (\ref{2-6}) is
proved.

(ii) The inequality (\ref{2-7}) is easily proved using (\ref{2-6a}) and
Lemma \ref{1.6}.
\end{proof}

\begin{remark}
In Theorem \ref{2.6}, if we take $\alpha =1$, then equality (\ref{2-7})
becomes equality in \cite[Corollary 13]{WZZ13}.
\end{remark}

\end{document}